\providecommand{\algorithmname}{Algorithm}
\numberwithin{equation}{section}
\numberwithin{figure}{section}
\theoremstyle{plain}
\newtheorem{thm}{\protect\theoremname}
  \theoremstyle{plain}
  \newtheorem{lem}[thm]{\protect\lemmaname}
  \theoremstyle{remark}
  \newtheorem*{rem*}{\protect\remarkname}
  \theoremstyle{plain}
  \newtheorem{prop}[thm]{\protect\propositionname}
  \providecommand{\lemmaname}{Lemma}
  \providecommand{\propositionname}{Proposition}
  \providecommand{\remarkname}{Remark}
\providecommand{\theoremname}{Theorem}
\begin{document}

\title{Barycentric Hermite Interpolation}

\author{Burhan Sadiq and Divakar Viswanath}

\thanks{NSF grants DMS-0715510, DMS-1115277, and SCREMS-1026317.}

\email{bsadiq@umich.edu and divakar@umich.edu}
\begin{abstract}
Let $z_{1},\ldots,z_{K}$ be distinct grid points. If $f_{k,0}$ is
the prescribed value of a function at the grid point $z_{k}$, and
$f_{k,r}$ the prescribed value of the $r$\foreignlanguage{american}{-th}
derivative, for $1\leq r\leq n_{k}-1$, the Hermite interpolant is
the unique polynomial of degree $N-1$ ($N=n_{1}+\cdots+n_{K}$) which
interpolates the prescribed function values and function derivatives.
We obtain another derivation of a method for Hermite interpolation
recently proposed by Butcher et al. {[}\emph{Numerical Algorithms,
vol. 56 (2011), p. 319-347}{]}. One advantage of our derivation is
that it leads to an efficient method for updating the barycentric
weights. If an additional derivative is prescribed at one of the interpolation
points, we show how to update the barycentric coefficients using only
$\mathcal{O}\left(N\right)$ operations. Even in the context of confluent
Newton series, a comparably efficient and general method to update
the coefficients appears not to be known. If the method is properly
implemented, it computes the barycentric weights with fewer operations
than other methods and has very good numerical stability even when
derivatives of high order are involved. We give a partial explanation
of its numerical stability.
\end{abstract}
\maketitle

\section{Introduction}

An example of a Hermite interpolant in barycentric form is the unique
polynomial $\pi(z)$ of degree $3$ such that $\pi(-1)=f_{-1}$, $\pi'(-1)=f_{-1}^{'}$,
$\pi(1)=f_{1}$, and $\pi'(1)=f_{1}^{'}$ as given by

\begin{eqnarray}
\pi(z) & = & (z-1)^{2}(z+1)^{2}\Biggl(f_{-1}\left(\frac{1}{4(z+1)^{2}}+\frac{1}{4(z+1)}\right)+\frac{f_{-1}^{'}}{4(z+1)}\nonumber \\
 &  & +f_{1}\left(\frac{1}{4(z-1)^{2}}-\frac{1}{4(z-1)}\right)+\frac{f_{1}^{'}}{4(z-1)}\Biggr).\label{eq:hermite-example-pm1}
\end{eqnarray}
 In general Hermite interpolation, the function value and its first
$n_{k}-1$ derivatives are prescribed as 
\[
f_{k,0},\ldots,f_{k,n_{k}-1}
\]
at the interpolation point or grid point $z_{k}$ for each $z_{k}$
from the list $z_{1},\ldots,z_{K}$. The problem is to find a polynomial
$\pi(z)$ of degree $N-1$, where $N=n_{1}+\cdots+n_{K}$, such that
\begin{equation}
\frac{d^{r}\pi(z)}{dz^{r}}\Biggl|_{z=z_{k}}=f_{k,r}\quad\text{for}\quad r=0,\ldots,n_{k}-1\label{eq:hermite-conditions}
\end{equation}
at each grid point $z_{k}$. We shall always assume the grid points
$z_{k}$ to be distinct. For an elegant proof of the existence and
uniqueness of the Hermite interpolant, see \cite{Davis1975}.

The polynomial $\pi(z)$ can be represented in either the Newton form
or the barycentric form. In the Newton form, the grid points $z_{k}$
must be ordered in some way \cite{ContedeBoorBook}. If the grid points
are not carefully ordered, the Newton form is susceptible to catastrophic
numerical instability \cite{FischerReichel1989,TalEzer1991}. In contrast,
the barycentric form does not require the grid points to be ordered
and treats all the grid points equally.

\textbf{Barycentric form. }The barycentric form of the interpolant
is 

\begin{equation}
\begin{split}\pi(z)=\pi^{\ast}(z)\sum_{k=1}^{K}\frac{f_{k,n_{k}-1}}{(n_{k}-1)!}\left(\frac{w_{k,0}}{(z-z_{k})}\right)+\frac{f_{k,n_{k}-2}}{(n_{k}-2)!}\left(\frac{w_{k,0}}{(z-z_{k})^{2}}+\frac{w_{k,1}}{(z-z_{k})}\right)+\cdots\\
+f_{k,0}\left(\frac{w_{k,0}}{(z-z_{k})^{n_{k}}}+\cdots+\frac{w_{k,n_{k}-1}}{(z-z_{k})}\right)
\end{split}
\label{eq:barycentric-form-1}
\end{equation}
where $\pi^{\ast}(z)$ is defined as $\prod_{k=1}^{K}(z-z_{k})^{n_{k}}$.
It is evident from inspection that $\pi(z)$, as represented in \prettyref{eq:barycentric-form-1},
is a polynomial of degree $N-1$. For a unique choice of weights $w_{k,r}$,
$\pi(z)$ will satisfy the interpolation conditions \prettyref{eq:hermite-conditions}.
Determining and updating the weights (or coefficients) $w_{k,r}$
of the barycentric form is the topic of this paper.

The representation of $\pi(z)$ shown in \prettyref{eq:barycentric-form-1}
is usually termed the first barycentric form. For the second barycentric
form, take $f_{k,0}=1$ and $f_{k,r}=0$ for $r\geq1$ corresponding
to the function $f(z)=1$. By the existence and uniqueness of the
Hermite interpolant, we have $1=\pi^{\ast}(z)\sum_{k=1}^{K}\sum_{r=0}^{n_{k}-1}w_{k,r}(z-z_{k})^{n_{k}-r}$.
Dividing \prettyref{eq:barycentric-form-1} by the barycentric representation
of $1$, we have 
\begin{equation}
\pi(z)=\frac{\sum_{k=1}^{K}\sum_{s=0}^{n_{k}-1}\frac{f_{k,s}}{s!}\sum_{r=0}^{n_{k}-s-1}w_{k,r}(z-z_{k})^{n_{k}-r-s}}{\sum_{k=1}^{K}\sum_{r=0}^{n_{k}-1}w_{k,r}(z-z_{k})^{n_{k}-r}}.\label{eq:barycentric-form-2}
\end{equation}
This is the second barycentric form. The second barycentric form has
a useful property that we now state. If the first barycentric form
\prettyref{eq:barycentric-form-1} satisfies the interpolation conditions
\prettyref{eq:hermite-conditions}, so does the second barycentric
form \prettyref{eq:barycentric-form-2}. Less obviously, for any choice
of the weights $w_{k,r}$ with $w_{k,0}\neq0$ for $k=1,\ldots,K$,
the second barycentric form is a rational function of $z$ which satisfies
the interpolation conditions \prettyref{eq:hermite-conditions} (see
\cite{SchneiderWerner1991}and \cite{BerrutBM2005}). The second barycentric
form is more robust in the presence of rounding errors in the weights
$w_{k,r}$ , as one may expect and as we will demonstrate in Section
4. 

For deriving the weights $w_{k,r}$, we shall work with the first
barycentric form \prettyref{eq:barycentric-form-1}. It is obvious
from inspection that either of the two forms can be used to evaluate
the interpolant $\pi(z)$ at a given point $z$ using $\mathcal{O}(N)$
arithmetic operations, although the second barycentric form is more
robust in the presence of rounding errors.

\textbf{Calculating barycentric weights using divided differences.}
One of the methods for finding the weights $w_{k,r}$ in $\mathcal{O}(N^{2})$
operations is due to Schneider and Werner \cite{SchneiderWerner1991}.
Two ideas go into that method. We briefly describe the two ideas in
the simpler context of Lagrange interpolation (see Werner \cite{Werner1984}
for this special case) where the problem is to find a polynomial $p(z)$
of degree $K-1$ such that $p(z_{k})=f_{k,0}$ holds at each grid
point $z_{k}$. 

The Newton representation of $p(z)$ is $a_{0}+a_{1}(z-z_{1})+a_{2}(z-z_{1})(z-z_{2})+\cdots+a_{K-1}\prod_{k<K}(z-z_{k})$
for some coefficients $a_{k}$. The Lagrange representation takes
the form $\sum_{k=1}^{K}b_{k}L_{k}(z)$, where $L_{k}(z)$ is the
un-normalized Lagrange cardinal function $\prod_{j\neq k}(z-z_{j})$.
The first idea is to note that the coefficients $a_{k}$ and the coefficients
$b_{k}$ are connected by a triangular matrix \cite{MilneThompson2000}.
The second idea is to note that if $p(z)\equiv1$ then $a_{0}=1$
and $a_{k}=0$ for $k\geq1$ and the coefficients $b_{k}$ are equal
to the Lagrange weights $w_{k}$. The triangular relationship between
the two sets of coefficients is inverted using divided differences
to compute the Lagrange weights. The method of Schneider and Werner
for computing the barycentric weights $w_{k,r}$ of the Hermite interpolant
follows the same logic.

We make two comments about the method of Schneider and Werner. Firstly,
because the Newton expansion is used implicitly, the method depends
upon divided differences and finding a good ordering of the grid points
$z_{k}$. Even if the grid points are carefully ordered and scaled
using the logarithmic capacity of the interval of interpolation, it
is unclear from the extant literature if the method is numerically
stable in the presence of high order derivatives. As our discussion
in Sections 4 and 5 will indicate, numerical stability is a more delicate
issue when high order derivatives occur in the Hermite data. 

Secondly, the method for finding barycentric weights described in
Section 2 typically has a lower operation count than the method of
divided differences. However, Schneider and Werner \cite{SchneiderWerner1991}
allow a general denominator in the barycentric form while we specialize
the denominator to be $1$. Even though it could be possible to specialize
the method of divided differences to lower the operation count, that
is unlikely to make it a better method for computing the barycentric
weights. The argument for the method of Section 2 is its simplicity,
directness, and numerical stability.

\textbf{The method of Butcher et al.  \hskip -.1cm \cite{ButcherCorless2011}
and its extension. }In Section 2, we give another derivation of the
method of Butcher et al.  \hskip -.05cm  for computing the barycentric
weights. The method was derived by Butcher et al.  \hskip -.05cm 
using contour integrals and the manipulation of infinite series. Our
derivation is more direct. The formalism of Butcher et al. extends
to Birkhoff interpolation (or Hermite interpolation with incomplete
data), a problem which is not discussed here.

To explain the basic idea we use in Section 2 for computing the barycentric
weights, we go back to the Hermite interpolant shown in \prettyref{eq:hermite-example-pm1}.
We want to compute a cubic polynomial whose Taylor expansion to two
terms with center $z=1$ is equal to $f_{1}+f_{1}^{'}(z-1)$ and with
center $z=-1$ is equal to $f_{-1}+f_{-1}^{'}(z+1)$. In the prefactor
$(z-1)^{2}(z+1)^{2}$, $(z-1)^{2}$ has an effect on the Taylor expansion
at $z=1$ that is easily neutralized by dividing by $(z-1)^{2}$.
The Taylor expansion of $(z+1)^{-2}$ to two terms about $z=1$ is
$1/4-(z-1)/4$. Therefore if $(z+1)^{2}$ is multiplied with $1/4-(z-1)/4$,
the product is $1+\mathcal{O}\left((z-1)^{2}\right).$ Thus the product
\begin{eqnarray*}
(z+1)^{2}(z-1)^{2}\times\frac{1}{(z-1)^{2}}\times\left(\frac{1}{4}-\frac{z-1}{4}\right)\times(f_{1}+f_{1}'(z-1))\\
=(z+1)^{2}(z-1)^{2}\left(f_{1}\left(\frac{1}{4(z-1)^{2}}-\frac{1}{4(z-1)}\right)+\frac{f_{1}^{'}}{4(z-1)}\right)+\mathcal{O}\left((z-1)^{2}\right)
\end{eqnarray*}
is equal to $f_{1}+f_{1}^{'}(z-1)+\mathcal{O}\left((z-1)^{2}\right)$.
This observation along with the fact that the expression above is
evidently $\mathcal{O}\left((z+1)^{2}\right)$ explains part of the
barycentric representation shown in \prettyref{eq:hermite-example-pm1}.

In general, computing barycentric weights comes down to finding the
coefficients in the Taylor polynomial of expressions of the form $\prod_{j\neq k}(z+z_{k}-z_{j})^{-n_{j}}$.
The coefficients of such expressions can be found efficiently and
with good numerical stability using a standard identity from symmetric
function theory.

In Section 3, we show how to update the barycentric weights when an
additional data item is introduced using only $\mathcal{O}\left(N\right)$
operations. Such an efficient update is not available for the Newton
representation of the Hermite interpolant \cite{ContedeBoorBook}.
If the Newton representation orders the grid points as $z_{1},\ldots,z_{K}$,
it can be easily updated if a function value is introduced at a new
grid point $z_{K+1}$ or if an additional derivative is introduced
at $z_{K}$. However, introducing an additional derivative at one
of the points $z_{1},\ldots,z_{K-1}$ will require a lot of the divided
differences table to be recomputed. Indeed, if the additional derivative
is introduced at $z_{1}$, the entire divided differences table has
to be recomputed. The method in Section 3 handles all these cases
using only $\mathcal{O}(N)$ operations.

\textbf{Numerical stability. }The second barycentric interpolant \prettyref{eq:barycentric-form-2}
of the Runge function $1/(1+z^{2})$ in the interval $-1\leq z\leq1$
using $K=512$ grid points and $n-1=47$ derivatives has a numerical
error of about $10^{-15}$ in the $\infty$-norm and in double precision
arithmetic. If the first barycentric form is used, the error is less
than $10^{-12}$ everywhere except very close to the endpoints. The
first barycentric form is very sensitive near the endpoints $z=\pm1$.
At the endpoints, the error in the first barycentric form is as large
as $10^{-7}$, a phenomenon that is discussed in Section 4. Here we
mention that the endpoints $\pm1$ are not included among the $512$
grid points.

It is probably unwise to restrict the discussion of numerical stability
to the interpolation error $||\pi(z)-f(z)||_{\infty}$. This is particularly
true for the second barycentric form which is quite good at producing
an accurate interpolant even if the barycentric weights $w_{k,r}$
themselves are inaccurate. In Section 4, we use extended precision
computations to check the relative errors in the barycentric weights.
We find that the barycentric weights are computed with good accuracy.

The key to the accuracy of the barycentric weights appears to be the
behavior of a triangular system that appears in the method of Section
2 (see Lemma \ref{lem:The-quantities-Ir} in particular). In Section
5, we prove that the barycentric weights are computed with small relative
error in a few limited situations.

\textbf{Barycentric Lagrange interpolation. }Berrut and Trefethen
\cite{BerrutTrefethen2004} have given a new exposition of the barycentric
form of Lagrange interpolation, emphasizing the usefulness of separating
the computation of the Lagrange weights from the evaluation of the
interpolant. They have pointed out that the emphasis on Newton interpolation
found in almost every textbook on numerical analysis is misplaced.
The barycentric form is as efficient as the Newton form and has better
numerical stability. For an application to the computation of finite
difference weights, see \cite{SadiqViswanath2011}. 

Both the first barycentric form \prettyref{eq:barycentric-form-1}
and second barycentric form \prettyref{eq:barycentric-form-2} of
the Hermite interpolant separate the computation of the weights $w_{k,r}$
from the evaluation of the interpolant. There are significant differences
from the Lagrange setting, however. Firstly, the computation of the
barycentric weights is a great deal more complicated than in the Lagrange
setting. The efficient updating of barycentric weights when new items
of data are introduced is even more involved. Finally, the second
barycentric form is more accurate than the first barycentric form
in the Hermite setting as we show in Section 4 (see Figure \ref{fig:Errors-fst-snd}).
In contrast, in the Lagrange setting, the first barycentric form is
backward stable whereas the second barycentric from has a more limited
type of forward stability \cite{Higham2004}.

\section{Barycentric weights}

The Hermite interpolant $\pi(z)$ is the unique polynomial of degree
$N-1$ which satisfies the $N-1$ interpolation conditions \prettyref{eq:hermite-conditions}.
The number of interpolation conditions at the grid point $z_{k}$,
$1\leq k\leq K$, is $n_{k}$ and $N=n_{1}+\cdots+n_{K}$. The polynomial
$\pi^{\ast}(z)$ defined as $\prod_{k=1}^{K}(z-z_{k})^{n_{k}}$ is
of degree $N$. The Hermite interpolation conditions can be reworded
as requiring the Taylor expansion of $\pi(z)$ centered at $z=z_{k}$
to be equal to
\begin{equation}
f_{k,0}+f_{k,1}(z-z_{k})+\cdots+\frac{f_{k,n_{k}-1}}{(n_{k}-1)!}(z-z_{k})^{n_{k}-1}+\mathcal{O}\left((z-z_{k})^{n_{k}}\right)\label{eq:hermite-taylor-conditions}
\end{equation}
at each grid point $z_{k}$. 

Define $\pi_{k}(z)=\pi^{\ast}(z)(z-z_{k})^{-n_{k}}$ so that $\pi_{k}(z)$
is a polynomial of degree $N-n_{k}$.The polynomial 
\begin{equation}
W_{k}(z)=\sum_{r=0}^{r=n_{k}-1}w_{k,r}(z-z_{k})^{r}\label{eq:Wkofz}
\end{equation}
 is defined by the requirement $\pi_{k}(z)W_{k}(z)=1+\mathcal{O}\left((z-z_{k})^{n_{k}}\right).$
In other words, $W_{k}(z)$ is equal to the Taylor series of $\pi_{k}(z)^{-1}$
with center $z=z_{k}$ and truncated at the $n_{k}$-th power.

The polynomial 
\[
\pi_{k}(z)W_{k}(z)\left(f_{k,0}+f_{k,1}(z-z_{k})+\cdots+\frac{f_{k,n_{k}-1}}{(n_{k}-1)!}(z-z_{k})^{n_{k}-1}\right)
\]
 has a Taylor expansion at $z=z_{k}$ which is identical to \prettyref{eq:hermite-taylor-conditions}
because $\pi_{k}(z)W_{k}(z)=1+\mathcal{O}\left((z-z_{k})^{n_{k}}\right)$
in the limit $z\rightarrow z_{k}$. In addition, for $j\neq k$ the
polynomial is $\mathcal{O}\left((z-z_{j})^{n_{j}}\right)$ in the
limit $z\rightarrow z_{j}$ because $\pi_{k}(z)$ is $\mathcal{O}\left((z-z_{j})^{n_{j}}\right)$
in that limit. However, the degree of the polynomial is $N+n_{k}-2$
which is more than $N-1$ if $n_{k}>1$. This difficulty is easily
fixed. We simply need to multiply $W_{k}(z)$, which is thought of
as a polynomial in $(z-z_{k})$, and $\sum_{r=0}^{n_{k}-1}\frac{f_{k,r}}{r!}(z-z_{k})^{r}$
and drop all terms of order $(z-z_{k})^{n_{k}}$ or higher. The following
polynomial
\begin{multline}
\pi_{k}(z)\Biggl(f_{k,0}\left(w_{k,0}+\cdots+w_{k,n_{k}-1}(z-z_{k})^{n_{k}-1}\right)+\cdots\\
+\frac{f_{k,n_{k}-2}(z-z_{k})^{n_{k}-2}}{(n_{k}-2)!}\left(w_{k,0}+w_{k,1}(z-z_{k})\right)+\frac{f_{k,n_{k}-1}(z-z_{k})^{n_{k}-1}}{(n_{k}-1)!}w_{k,0}\Biggr)\label{eq:barycentric-intermediate}
\end{multline}
is of degree $N-1$. For $j\neq k$, it is $\mathcal{O}\left((z-z_{j})^{n_{j}}\right)$
near $z_{j}$ because $\pi_{k}(z)$ is divisible by $(z-z_{j})^{n_{j}}$.
At $z=z_{k}$, it satisfies the interpolation condition \prettyref{eq:hermite-taylor-conditions}.
We have the following lemma.
\begin{lem}
The barycentric weights $w_{k,r}$, $r=0,1,\ldots,n_{k}-1$, are the
coefficients of the unique polynomial $W_{k}(z)$ of degree $n_{k}-1$,
shown in \prettyref{eq:Wkofz}, which satisfies $\pi_{k}(z)W_{k}(z)=1+\mathcal{O}\left((z-z_{k})^{n_{k}}\right)$
in the limit $z\rightarrow z_{k}$.\label{lem:The-barycentric-weights}
\end{lem}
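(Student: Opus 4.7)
The plan is to establish the lemma in three short steps, each of which is essentially a repackaging of the construction presented in the paragraphs above. First I would verify that $W_{k}(z)$ exists and is unique. Since the grid points are distinct, $\pi_{k}(z_{k})=\prod_{j\neq k}(z_{k}-z_{j})^{n_{j}}$ is nonzero, so $1/\pi_{k}(z)$ is analytic at $z_{k}$, and the polynomial $W_{k}(z)$ of \prettyref{eq:Wkofz} is precisely the degree-$(n_{k}-1)$ truncation of its Taylor expansion there. Equivalently, matching coefficients on the two sides of $\pi_{k}(z)W_{k}(z)=1+\mathcal{O}\bigl((z-z_{k})^{n_{k}}\bigr)$ gives a triangular linear system for $w_{k,0},\ldots,w_{k,n_{k}-1}$ whose diagonal entries all equal $\pi_{k}(z_{k})\neq0$, so the weights are uniquely determined.

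Next I would argue that these coefficients actually produce the barycentric form of $\pi(z)$. Let $P_{k}(z)$ denote the polynomial in \prettyref{eq:barycentric-intermediate}, namely $\pi_{k}(z)$ times the truncation modulo $(z-z_{k})^{n_{k}}$ of $W_{k}(z)\sum_{s=0}^{n_{k}-1}\frac{f_{k,s}}{s!}(z-z_{k})^{s}$. As already observed in the text, $P_{k}$ has degree at most $N-1$, vanishes to order $n_{j}$ at every $z_{j}$ with $j\neq k$ because $\pi_{k}(z)$ does, and at $z_{k}$ produces the correct Taylor expansion because $\pi_{k}W_{k}\equiv1\pmod{(z-z_{k})^{n_{k}}}$. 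Summing over $k$, the polynomial $\sum_{k}P_{k}(z)$ has degree at most $N-1$ and satisfies all of the Hermite conditions \prettyref{eq:hermite-conditions}, so by uniqueness of the Hermite interpolant it equals $\pi(z)$.

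Finally, substituting $\pi_{k}(z)=\pi^{\ast}(z)/(z-z_{k})^{n_{k}}$ in $P_{k}(z)$ and distributing the truncated product over the indices $r$ and $s$ rewrites $\sum_{k}P_{k}(z)$ termwise as the right-hand side of \prettyref{eq:barycentric-form-1}, with the same $w_{k,r}$ that arise from $W_{k}$. Uniqueness of the weights appearing in \prettyref{eq:barycentric-form-1} then follows either from uniqueness of $\pi(z)$ combined with the triangular system above, or directly by matching Laurent coefficients of $\pi(z)/\pi^{\ast}(z)$ at $z_{k}$: the $(z-z_{k})^{-n_{k}}$ coefficient pins down $w_{k,0}$, then $(z-z_{k})^{-n_{k}+1}$ pins down $w_{k,1}$, and so on, reproducing the very system that defines $W_{k}$.

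There is no deep obstacle here; the lemma is essentially a restatement of the construction preceding it. The only point that requires care is clerical bookkeeping on the truncated Cauchy product: one must confirm that the coefficient of $(z-z_{k})^{r+s}$ in $W_{k}(z)\sum\frac{f_{k,s}}{s!}(z-z_{k})^{s}$, after multiplication by $\pi^{\ast}(z)/(z-z_{k})^{n_{k}}$, lands on the $w_{k,r}/(z-z_{k})^{n_{k}-r-s}$ slot of \prettyref{eq:barycentric-form-1} for every admissible pair $(r,s)$. That index check is the only place an error could slip in.
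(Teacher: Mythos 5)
Your proposal is correct and follows essentially the same route as the paper: you identify $W_{k}$ as the truncated Taylor expansion of $\pi_{k}(z)^{-1}$ at $z_{k}$ (unique because $\pi_{k}(z_{k})\neq0$ since the grid points are distinct), verify that the intermediate polynomial \prettyref{eq:barycentric-intermediate} has degree $N-1$ and meets all the Hermite conditions, and sum over $k$ to recover \prettyref{eq:barycentric-form-1}. The only difference is that you make explicit the appeal to uniqueness of the Hermite interpolant and the index bookkeeping in the truncated product, both of which the paper leaves implicit in the discussion preceding the lemma.
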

To calculate the barycentric weights, it is useful to look at the
following equation
\begin{equation}
\frac{1}{\left(1-\frac{z}{\alpha_{1}}\right)\left(1-\frac{z}{\alpha_{2}}\right)\ldots\left(1-\frac{z}{\alpha_{N}}\right)}=1+\mathcal{I}_{1}z+\mathcal{I}_{2}z^{2}+\cdots\label{eq:I12}
\end{equation}
We assume $\alpha_{k}\neq0$ and $\mathcal{I}_{1},\mathcal{I}_{2},\ldots$
on the right hand side of \prettyref{eq:I12} are defined by expanding
the left hand side in powers of $z$. 

To find an efficient method to compute the $\mathcal{I}_{r}$, we
define $\mathcal{P}_{r}=\sum_{k=1}^{N}\alpha_{k}^{-r}$ and differentiate
\prettyref{eq:I12} with respect to $z$ to get 
\begin{eqnarray*}
\frac{1}{\prod_{k=1}^{N}\left(1-\frac{z}{\alpha_{k}}\right)}\sum_{k=1}^{N}\frac{1}{\alpha_{k}}\left(1-\frac{z}{\alpha_{k}}\right)^{-1} & = & \mathcal{I}_{1}+2\mathcal{I}_{2}z+3\mathcal{I}_{3}z^{2}+\cdots\\
\left(1+\mathcal{I}_{1}z+\mathcal{I}_{2}z^{2}+\cdots\right)\left(\mathcal{P}_{1}+\mathcal{P}_{2}z+\cdots\right) & = & \mathcal{I}_{1}+2\mathcal{I}_{2}z+\cdots
\end{eqnarray*}
Equating coefficients of $z$, we have the following lemma.
\begin{lem}
The quantities $\mathcal{I}_{r}$ defined by \prettyref{eq:I12} are
related to the inverse power sums $\mathcal{P}_{r}=\sum_{k=1}^{N}\alpha_{k}^{-r}$
by 
\begin{eqnarray*}
\mathcal{I}_{1} & = & \mathcal{P}_{1}\\
2\mathcal{I}_{2} & = & \mathcal{P}_{2}+\mathcal{I}_{1}\mathcal{P}_{1}\\
3\mathcal{I}_{3} & = & \mathcal{P}_{3}+\mathcal{I}_{1}\mathcal{P}_{2}+\mathcal{I}_{2}\mathcal{P}_{1}\\
4\mathcal{I}_{4} & = & \mathcal{P}_{4}+\mathcal{I}_{1}\mathcal{P}_{3}+\mathcal{I}_{2}\mathcal{P}_{2}+\mathcal{I}_{3}\mathcal{P}_{1}
\end{eqnarray*}
and so on. \label{lem:The-quantities-Ir}
\end{lem}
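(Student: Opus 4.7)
The plan is to formalize the derivation sketched by the authors immediately before the lemma statement. Let $F(z)$ denote the left-hand side of \prettyref{eq:I12}. Since every $\alpha_{k}\neq 0$, $F$ is analytic at $z=0$ and the coefficients $\mathcal{I}_{r}$ in its Maclaurin expansion are well defined. First I would take the logarithmic derivative of the product on the left of \prettyref{eq:I12} to obtain
\[
F'(z) \;=\; F(z)\sum_{k=1}^{N}\frac{1}{\alpha_{k}}\cdot\frac{1}{1-z/\alpha_{k}},
\]
which matches the intermediate identity written out in the text.

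Next I would expand each factor $1/(1-z/\alpha_{k})$ as a geometric series (valid for $|z|<\min_{k}|\alpha_{k}|$) and interchange the order of summation to get
\[
\sum_{k=1}^{N}\frac{1}{\alpha_{k}}\cdot\frac{1}{1-z/\alpha_{k}} \;=\; \sum_{r=0}^{\infty}\Bigl(\sum_{k=1}^{N}\alpha_{k}^{-r-1}\Bigr)z^{r} \;=\; \sum_{r=0}^{\infty}\mathcal{P}_{r+1}\,z^{r},
\]
by the definition $\mathcal{P}_{r}=\sum_{k}\alpha_{k}^{-r}$. Substituting this together with $F(z)=1+\mathcal{I}_{1}z+\mathcal{I}_{2}z^{2}+\cdots$ and $F'(z)=\mathcal{I}_{1}+2\mathcal{I}_{2}z+3\mathcal{I}_{3}z^{2}+\cdots$ into the logarithmic derivative identity yields the formal power series equality
\[
\mathcal{I}_{1}+2\mathcal{I}_{2}z+3\mathcal{I}_{3}z^{2}+\cdots \;=\; \bigl(1+\mathcal{I}_{1}z+\mathcal{I}_{2}z^{2}+\cdots\bigr)\bigl(\mathcal{P}_{1}+\mathcal{P}_{2}z+\mathcal{P}_{3}z^{2}+\cdots\bigr).
\]

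The proof concludes by equating coefficients of $z^{r-1}$ on both sides: adopting the convention $\mathcal{I}_{0}=1$, this gives $r\,\mathcal{I}_{r}=\sum_{j=0}^{r-1}\mathcal{I}_{j}\,\mathcal{P}_{r-j}$ for every $r\geq 1$, which is precisely the family of relations displayed in the lemma (the listed cases $r=1,2,3,4$ being the first four instances). There is really no obstacle here; the only point requiring a line of justification is the interchange of summation and the multiplication of series, both of which are legitimate because the relevant series converge absolutely on a small enough disc around the origin, so the identity holds as an identity of formal power series and hence coefficient-wise. This recurrence is, incidentally, a special instance of the classical Newton identity relating power sums to the complete homogeneous symmetric functions of the $\alpha_{k}^{-1}$, and one could alternatively quote that identity directly.
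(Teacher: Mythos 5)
Your proposal is correct and follows essentially the same route as the paper: logarithmic differentiation of \prettyref{eq:I12}, expansion of $\sum_{k}\alpha_{k}^{-1}(1-z/\alpha_{k})^{-1}$ into the power-sum series $\mathcal{P}_{1}+\mathcal{P}_{2}z+\cdots$, and equating coefficients. The only difference is that you spell out the convergence and interchange-of-summation justifications that the paper leaves implicit, which is fine but not a different argument.
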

Coincidentally, \prettyref{lem:The-quantities-Ir} appears as Lemma
2 in \cite{ButcherCorless2011} and in \cite{Szabados1993}, but with
different proofs. It has been well known in symmetric function theory
for a very long time.
\begin{rem*}
This lemma appears in passing in modern books on combinatorics. Stanley
\cite[p. 396]{Stanley1999} refers to the review article by Vahlen
\cite[1904]{Vahlen1904} for information about the sources of such
identities. The identities appear to be due to L. Crocchi \cite[1880]{Crocchi1880}.
\end{rem*}
If we note that 
\[
\pi_{k}(z)^{-1}=\prod_{j\neq k}(z_{k}-z_{j})^{-n_{j}}\prod_{j\neq k}\left(1-\frac{z-z_{k}}{z_{j}-z_{k}}\right)^{-n_{j}},
\]
Lemmas \ref{lem:The-barycentric-weights} and \ref{lem:The-quantities-Ir}
imply an algorithm for computing $w_{k,r}$. The algorithm is to begin
by computing $C_{k}=\prod_{j\neq k}(z_{k}-z_{j})^{-n_{j}}$ and define
$\mathcal{P}_{r}=\sum_{j\neq k}n_{j}(z_{j}-z_{k})^{-r}$. The quantities
$\mathcal{I}_{r}$ are obtained from Lemma \ref{lem:The-quantities-Ir}
and $w_{k,r}=C_{k}\mathcal{I}_{r}$. The grid points $z_{k}$ are
assumed to be distinct.

\begin{algorithm}
\begin{algorithmic}[1]
\Function{findPI}{$\zeta_1,\ldots,\zeta_K$, $n_1,\ldots,n_K$}
\State{return $\zeta_1^{n_1}\ldots\zeta_K^{n_K}$}
\EndFunction
\Function{nInvertList}{$z_1,\ldots,z_K$, $\zeta_1,\ldots,\zeta_K$}
\State{$\zeta_k=-1/z_k$ for $k=1,\ldots,K$}
\EndFunction
\Function{PowerSums}{$\zeta_1,\ldots,\zeta_K$, $n_1,\ldots,n_K$, 
                    $\mathcal{P}_1,\ldots,\mathcal{P}_{n-1}$}
\State{Temporaries: $t_1,\ldots,t_K$}
\State{$t_k=n_k$ for $1\leq k\leq K$}
\For{$r=1,\ldots,n-1$}
\State{$t_k=\zeta_k t_k$ for $1\leq k\leq K$}
\State{$\mathcal{P}_r=t_1+\cdots+t_K$}
\EndFor
\EndFunction
\Function{InversePoly}{$\mathcal{P}_1,\ldots,\mathcal{P}_{n-1}$,
                       $\mathcal{I}_0,\mathcal{I}_1,\ldots,\mathcal{I}_{n-1}$}
\State{$\mathcal{I}_0=1$}
\For{$r=1,\ldots,n-1$}
\State{$\mathcal{I}_r=\sum_{s=1}^{r}\mathcal{P}_s \mathcal{I}_{r-s}/r$}
\EndFor
\EndFunction
\State{Comment: The grid points $z_k$ must be distinct.}
\Function{HermiteWeights}{$z_1,\ldots,z_K$,$n_1,\ldots,n_K$,
                          $w_{k,r}$ with $1\leq k\leq K$ and $0\leq r < n_k$}
\For{$k=1,\ldots,K$}
\State{Temporaries: $z_1',\ldots,z_{K-1}'$, $n_1',\ldots,n_{K-1}'$,$n$, $C_k$,
                    $\zeta_1,\ldots,\zeta_{K-1}$}
\State{$n=n_k$}
\State{Temporaries: $\mathcal{P}_1,\ldots,\mathcal{P}_{n-1}$,
                    $\mathcal{I}_0,\ldots,\mathcal{I}_{n-1}$}
\State{$z_j'=z_k-z_j$ for $1\leq j < k$ and 
      $z_j'=z_k-z_{j+1}$ for $k\leq j \leq K-1$}
\State{$n_j'=n_j$ for $1\leq j < k$ and
      $n_j'=n_{j+1}$ for $k\leq j \leq K-1$}
\State{\Call{nInvertList}{$z_1',\ldots,z_{K-1}'$,$\zeta_1,\ldots,\zeta_{K-1}$}}
\State{\Call{PowerSums}{$\zeta_1,\ldots,\zeta_{K-1}$,$n_1',\ldots,n_{K-1}'$,
                        $\mathcal{P}_1,\ldots,\mathcal{P}_{n-1}$}}
\State{\Call{InversePoly}{$\mathcal{P}_1,\ldots,\mathcal{P}_{n-1}$,
                          $\mathcal{I}_0,\mathcal{I}_1,\ldots,\mathcal{I}_{n-1}$}}
\State{$C_k$= 1/\Call{findPI}{$z_1',\ldots,z_{K-1}'$,$n_1',\ldots,n_{K-1}'$}}
\State{$w_{k,r}=C_k\mathcal{I}_r$ for $r=0,1,\ldots,n-1$}
\EndFor
\EndFunction
\end{algorithmic}

\caption{Computing barycentric weights\label{alg:Computing-barycentric-weights}}
\end{algorithm}

Pseudo-code derived from a C++ implementation is displayed as Algorithm
\prettyref{alg:Computing-barycentric-weights}. For another derivation
of the same algorithm, see Butcher et al. \cite{ButcherCorless2011}.

The functions \texttt{PowerSums }and \texttt{InversePoly} defined
on lines 7 and 15 of Algorithm \prettyref{alg:Computing-barycentric-weights}
perform $2nK$ and $n^{2}$ arithmetic operations, respectively. Summing
the cost of invoking those functions from lines 30 and 31, we find
that the number of arithmetic operations performed by Algorithm \prettyref{alg:Computing-barycentric-weights}
is $2NK+\sum_{k=1}^{K}n_{k}^{2}$ to leading order. Roughly half of
these operations are additions or subtractions and roughly half are
multiplications. On line 29, a total of $K^{2}$ divisions are performed.
That count can be easily reduced to $K^{2}/2$ because the differences
$z_{k}-z_{j}$ that needed to be inverted are only $K^{2}/2$ in number.
The arithmetic operations on lines 32 and 33 do not affect the leading
order of the total count.

The method of divided differences for computing the barycentric weights
$w_{k,r}$ requires $K(K-1)/2$ divisions and $\frac{N^{2}-\sum n_{k}^{2}}{2}$
multiplications \cite{SchneiderWerner1991}. The number of subtractions
or additions is about the same as the number of multiplications. To
compare the operation counts of the two methods, consider the situation
in which $n_{1}=\cdots=n_{K}=n$. In that situation, the number of
multiplications used by the method of divided differences is $\left(n^{2}K^{2}-Kn^{2}\right)/2$
as against $nK^{2}+n^{2}K$ used by the method of Newton identities.
The ratio of the two quantities is $(K-1)/(2K/n+1)$. The same ratio
applies to the number of subtractions or additions. The two methods
have similar operation counts if $n=2$, but for $n>2$ Algorithm
\prettyref{alg:Computing-barycentric-weights} uses fewer arithmetic
operations by a factor of approximately $n/2$.

The argument for Algorithm \prettyref{alg:Computing-barycentric-weights}
over the method of divided differences is its simplicity and numerical
stability, which will be illustrated in Sections 4 and 5. In addition,
Algorithm \prettyref{alg:Computing-barycentric-weights} has a favorable
operation count.

\section{Updating the barycentric weights}

Suppose the barycentric weights $w_{k,r}$ have been computed assuming
that the function value and $n_{k}-1$ derivatives are prescribed
at grid points $z_{k}$, for $1\leq k\leq K$. Suppose a new item
of data is introduced at the grid point $\zeta$. If $\zeta$ is a
new grid point, the new item of data is the function value at the
new grid point. If $\zeta=z_{k}$ for some $1\leq k\leq K$, the new
item of data must be the $n_{k}$-th derivative at $z_{k}$. In either
case, one new barycentric weight must be computed and all the other
barycentric weights must be updated. We show how to do that in $\mathcal{O}(N)$
operations, where $N=n_{1}+\cdots+n_{K}$.

Suppose new data is introduced at $\zeta$ and $z_{k}\neq\zeta$.
Updating the weights $w_{k,r}$, $0\leq r<n_{k}$, is the easy part.
The polynomial $\pi_{k}(z)$ must be changed to $\pi_{k}(z)(z-\zeta)$.
The weights $w_{k,r}$ were determined such that $\pi_{k}(z)W_{k}(z)=1+\mathcal{O}\left((z-z_{k})^{n_{k}}\right)$
for $z\rightarrow z_{k}$ with $W_{k}(z)=\sum_{r=0}^{n_{k}-1}w_{k,r}(z-z_{k})^{r}$.
If the new weights are $w_{k,r}'$ and the corresponding polynomial
is $W_{k}'(z)$, then we require 
\[
\pi_{k}(z)(z-\zeta)W_{k}'(z)=1+\mathcal{O}\left((z-z_{k})^{n_{k}}\right).
\]
To ensure this condition, it is enough if we find $W_{k}'(z)$ of
degree $n_{k}-1$ such that 
\[
(z-\zeta)W_{k}'(z)=\bigl((z-z_{k})+(z_{k}-\zeta)\bigr)W_{k}'(z)=W_{k}(z)+\mathcal{O}\left((z-z_{k})^{n_{k}}\right)
\]
in the limit $z\rightarrow z_{k}$. This gives the equations
\begin{eqnarray}
(z_{k}-\zeta)w_{k,0}' & = & w_{k,0}\nonumber \\
(z_{k}-\zeta)w_{k,r}'+w_{k,r-1}' & = & w_{k,r}\quad\text{for}\quad1\le r<n_{k}\label{eq:update-wkr}
\end{eqnarray}
which are solved to deduce the updated barycentric weights $w_{k,r}'$.
It takes two operations to update each barycentric weight $w_{k,r}$
for $k$ with $z_{k}\neq\zeta$.

If $z_{k}\neq\zeta$ for $1\leq k\leq K$, then $\zeta$ is the new
grid point $z_{K+1}$. The weights $w_{k,r}$ are updated using \prettyref{eq:update-wkr}
for $1\leq k\leq K$ as already described. We need to compute the
new weight $w_{K+1,0}$. This weight is computed using the formula
$w_{K+1,0}=\prod_{j=1}^{K}(\zeta-z_{k})^{-n_{k}}$. If $\zeta$ is
a new grid point, namely $z_{K+1}$, the total cost for updating all
the weights and computing the new weight is $\mathcal{O}(N)$.

If $\zeta=z_{\kappa}$ for some $\kappa$, $1\leq\kappa\leq K$, the
weights $w_{k,r}$ are updated as already described if $k\neq\kappa$.
We are left with the case of updating the weights $w_{\kappa,r}$
with $\zeta=z_{\kappa}$ for some $\kappa$, $1\leq\kappa\leq K$.
In this case, the new data item is another derivative at $z_{\kappa}$.
Efficient updating of the weights $w_{\kappa,r}$ causes complications
that we now turn to. 

Define 
\begin{eqnarray*}
A_{k} & = & \left\{ 1/(z_{j}-z_{k})\:\text{repeated}\: n_{j}\:\text{times}\:\Bigl|\: j\neq k\:\text{and}\:1\leq j\leq K\right\} \\
C_{k} & = & \prod_{j\neq k}(z_{k}-z_{j})^{-n_{j}}.
\end{eqnarray*}
Here $A_{k}$ is a multiset. Define $\mathcal{P}_{r}(A_{k})$ as the
sum of the $r$-th powers of the elements of $A_{k}$. Let $\mathcal{I}_{r}(A_{k})$
be related to $\mathcal{P}_{r}(A_{k})$ by the triangular identities
of Lemma \ref{lem:The-quantities-Ir}. According to Algorithm \ref{alg:Computing-barycentric-weights},
the weight $w_{k,r}$ is given by $w_{k,r}=C_{k}\mathcal{I}_{r}(A_{k})$
for $0\leq r<n_{k}$ and $1\leq k\leq K$. When computing the weights
$w_{k,0},\ldots,w_{k,n_{k}-1}$, the $2n_{k}-1$ intermediate quantities
\begin{equation}
C_{k},\mathcal{P}_{1}(A_{k}),\ldots,\mathcal{P}_{n_{k}-1}(A_{k}),\mathcal{I}_{1}(A_{k}),\ldots,\mathcal{I}_{n_{k}-1}(A_{k})\label{eq:intermediate}
\end{equation}
arise. We assume that these intermediate quantities are stored for
each $k$. The total number of intermediate quantities stored is $2N-K$.

The reason for storing these intermediate quantities is as follows.
Suppose a new data entry is introduced at the grid point $\zeta=z_{\kappa}$.
Then a new weight $w_{\kappa,n_{\kappa}}$ needs to be generated.
To calculate that weight, $\mathcal{I}_{n_{\kappa}}(A_{\kappa})$
will be generated using the triangular identities of Lemma \ref{lem:The-quantities-Ir}
(the update rules of \prettyref{eq:update-wkr} are of no use here).
The intermediate quantities are used to calculate $\mathcal{I}_{n_{\kappa}}\left(A_{\kappa}\right)$. 

When new data is introduced at the point $\zeta$, the intermediate
quantities are updated at all grid points $z_{k}$ regardless of whether
$z_{k}\neq\zeta$ or $z_{k}=\zeta$ (so than we can handle a new data
item introduced at $z_{k}$ later on). If $z_{k}\neq\zeta$, some
of the intermediate quantities in the list \prettyref{eq:intermediate}
are updated as follows.
\begin{eqnarray}
C_{k}' & \leftarrow & C_{k}/(z_{k}-\zeta)\nonumber \\
\mathcal{P}_{r}(A_{k}') & \leftarrow & \mathcal{P}_{r}(A_{k})+(\zeta-z_{k})^{-r}\quad\text{for}\quad1\leq r<n_{k}\label{eq:update1}
\end{eqnarray}
where the primes denote updated quantities with $A_{k}'=A_{k}\cup\left\{ 1/(\zeta-z_{k})\right\} $
(this is a multiset union not a set union). To update $\mathcal{I}_{r}$,
we go back to \prettyref{eq:I12}, which defines $\mathcal{I}_{1},\mathcal{I}_{2},\ldots$,
and deduce the following:
\begin{equation}
\mathcal{I}_{r}(A_{k}')-\frac{\mathcal{I}_{r-1}(A_{k}')}{\zeta-z_{k}}=\mathcal{I}_{r}(A_{k})\quad\text{for}\quad r=1,\ldots,n_{k}-1\label{eq:update2}
\end{equation}
where it is assumed that $\mathcal{I}_{0}=1$. Using \prettyref{eq:update1}
and \prettyref{eq:update2}, it costs two operations to update each
intermediate quantity in the list \prettyref{eq:intermediate}. Since
the total number of intermediate quantities is $\mathcal{O}(N)$,
the cost for updating the intermediate quantities for $k\neq\kappa$
is also $\mathcal{O}(N)$. 

In $\zeta=z_{\kappa}$, $A_{\kappa}$ does not change and none of
the intermediate quantities in \prettyref{eq:intermediate} with $k=\kappa$
needs to be updated. However, we need to generate $\mathcal{P}_{n_{\kappa}}(A_{\kappa})$
and $\mathcal{I}_{n_{\kappa}}(A_{\kappa})$. These are generated using
the formulas 
\[
\mathcal{P}_{n_{\kappa}}(A_{\kappa})=\sum_{j\neq\kappa}n_{j}\,(z_{j}-z_{\kappa})^{-n_{\kappa}}\quad\mathcal{I}_{n_{\kappa}}(A_{\kappa})=(1/n_{\kappa})\sum_{s=1}^{n_{\kappa}}\mathcal{P}(A_{\kappa})\mathcal{I}_{n_{\kappa}-s}(A_{\kappa})
\]
the first of which is by definition and the second is a triangular
identity of Lemma \ref{lem:The-quantities-Ir}. If the temporaries
$t_{j}$ that arise on line 8 of Algorithm \ref{alg:Computing-barycentric-weights}
are preserved, $\mathcal{P}_{n_{\kappa}}$ can be computed using only
$2n_{\kappa}-1$ operations. The barycentric weights $w_{\kappa,0},\ldots,w_{\kappa,n_{\kappa}-1}$
do not change. The weight $w_{\kappa,n_{\kappa}}$ is computed as
$C_{\kappa}\mathcal{I}_{n_{\kappa}}$. We have thus completed describing
an $\mathcal{O}(N)$ method for updating all the barycentric weights
and generating a new weight when a new data item is introduced.

If the barycentric weights are generated by repeatedly adding new
items of data at grid points, the grid points must be ordered in some
fashion. As we noted in the introductory section, ordering the grid
points may make the method vulnerable to numerical instability. If
the weights need to be updated a small number of times to accommodate
new items of data, the method described in this section will be numerically
stable.

A more general updating problem is to update the existing barycentric
weights and generate new ones, when $n_{k}'$ new data items are introduced
at the grid point $z_{k}$ for $1\leq k\leq K$. Algorithm \ref{alg:Computing-barycentric-weights}
and the updating algorithm of this section can be combined to deduce
an efficient and numerically stable solution to this more general
updating problem.

\section{Illustration of numerical stability}

If the grid points $z_{k}$, $1\leq k\leq K$, and the number of derivatives
at grid points $n_{k}$, $1\leq k\leq K$, are chosen without care,
the barycentric interpolation of $f(z)$ using the Hermite interpolant
$\pi(z)$ can be very badly conditioned. To test the barycentric formulas
\prettyref{eq:barycentric-form-1} and \prettyref{eq:barycentric-form-2},
we first determine a choice of $z_{k},\, n_{k}$ which leads to a
well-conditioned interpolation problem. 

Let $L_{k,r}(z)$ denote the polynomial of degree $N-1$ ($N=n_{1}+\cdots+n_{K}$)
such that 
\[
\frac{d^{s}L_{k,r}(z)}{dz^{s}}\Bigl|_{z=z_{\ell}}=\delta_{r,s}\delta_{k,\ell}\quad\text{for}\quad1\leq\ell\leq K,\:0\leq s<n_{l}.
\]
The fundamental polynomials (or cardinal functions) of Hermite interpolation
$L_{k,r}(z)$ have a prescribed behavior at the grid points. However,
they may become very large in-between the grid points leading to a
poorly conditioned interpolation problem.

If the grid points are chosen as $z_{k}=\cos\left((2k-1)\pi/2K\right)$,
$1\leq k\leq K,$ (these are the Chebyshev points), and $n_{1}=\cdots=n_{K}=n$,
Szabados \cite{Szabados1993} has proved upper bounds on the fundamental
polynomials in the interval $[-1,1]$. In particular,
\[
\sup_{-1\leq z\leq1}\sum_{k=1}^{K}|L_{k,r}(z)|\leq\begin{cases}
\begin{aligned}\begin{aligned}\mathcal{O}\left(\frac{\log K}{K^{r}}\right)\end{aligned}
\quad\text{if}\: n-r\:\text{is odd}\\
\mathcal{O}\left(\frac{1}{K^{r}}\right)\quad\text{if}\: n-r\:\text{is even}
\end{aligned}
\end{cases}
\]
for $r=0,\ldots,n-1$. These bounds imply that a small change in the
interpolation data $f_{k,r}$ will result in a small change in the
interpolant $\pi(z)$ (defined by \prettyref{eq:barycentric-form-1}
or \prettyref{eq:barycentric-form-2}) thus showing the Hermite interpolation
problem to be well-conditioned.

In all our computations, the Chebyshev points $z_{k}$ are replaced
by $2z_{k}$ and correspondingly the data $f_{k,r}$ are replaced
by $f_{k,r}/2^{r}$. If $z\in[-1,1]$, we have 
\[
\lim_{K\rightarrow\infty}\left(\prod_{k=1}^{K}(z-z_{k})\right)^{1/K}=\frac{1}{2}
\]
because the capacity of an interval is a quarter of its length \cite[p. 83]{Davis1975}.
So we may expect the prefactor $\pi^{\ast}(z)$ in the first barycentric
form \prettyref{eq:barycentric-form-1} as well as the quantity $C_{k}$,
which occurs on line 32 of Algorithm \prettyref{alg:Computing-barycentric-weights},
to be roughly of the order $1/2^{nK}$. For reasonably large $K$
and $n$, $C_{k}$ and $\pi^{\ast}(z)$ underflow in IEEE double precision
arithmetic. Replacing $z_{k}$ by $2z_{k}$ and taking $z\in[-2,2]$
changes the limit above to $1$ implying that the quantities such
as $C_{k}$ that occur in the algorithm are better scaled.

Barycentric Hermite interpolation problem is highly susceptible to
overflows and underflows. Scaling the Chebyshev points so that the
capacity of the interval is $1$ as in the previous paragraph is only
a partial cure. Even if the $z_{k}$ are scaled as indicated, a product
evaluated in the order $(2z-2z_{1})$, $(2z-2z_{1})(2z-2z_{2})$,
$(2z-2z_{1})(2z-2z_{2})(2z-2z_{3})$, and so on may overflow or underflow
in its intermediate stages even though the final result can be represented
in machine arithmetic. To decrease the chances of such a thing happening,
we reorder the Chebyshev points $z_{k}$ in the Leja ordering \cite{Reichel1990}
in addition to multiplying them by $2$.

In our implementation of the barycentric formulas \prettyref{eq:barycentric-form-1}
and \prettyref{eq:barycentric-form-2}, the function values input
to the interpolation procedures are $f_{n,r}/r!$ not $f_{n,r}$.
This eliminates the need to first multiply and then divide a quantity
by $r!$ (the computation of which is obviously prone to overflows)
for certain functions such as the Runge function discussed below.

\begin{figure}
\subfloat[]{\centering{}\includegraphics[scale=0.4]{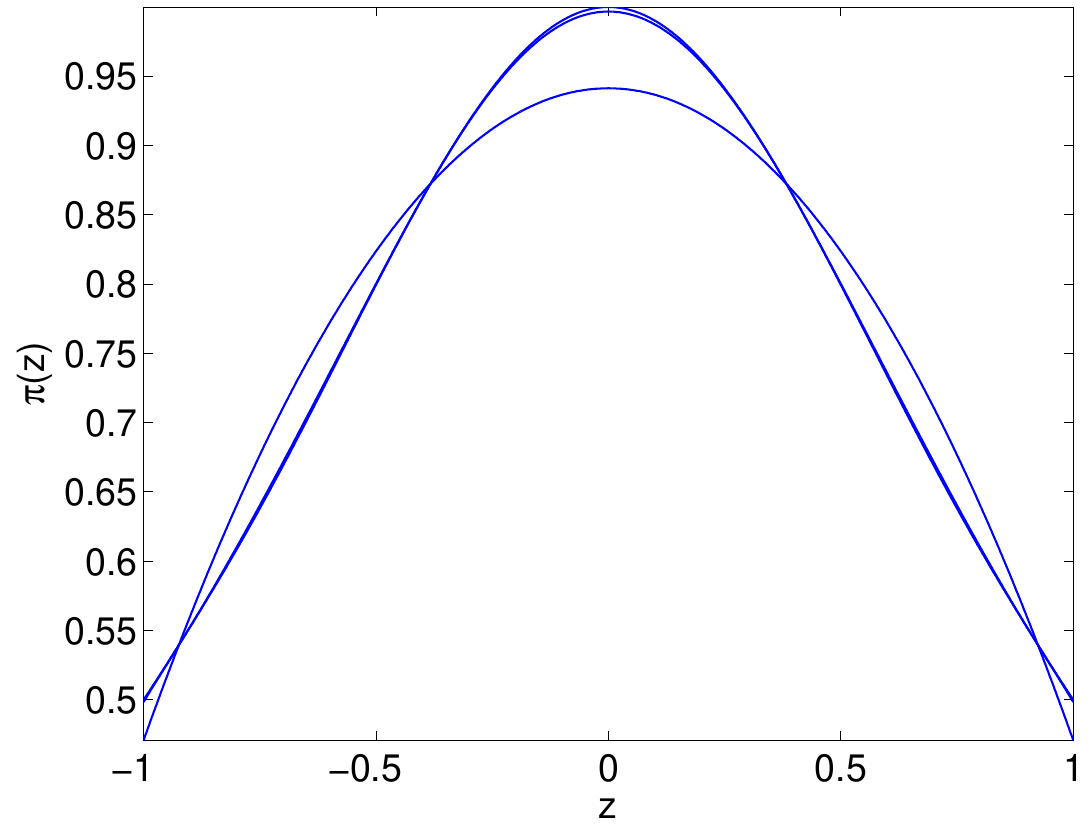}}\subfloat[]{\begin{centering}
\includegraphics[scale=0.4]{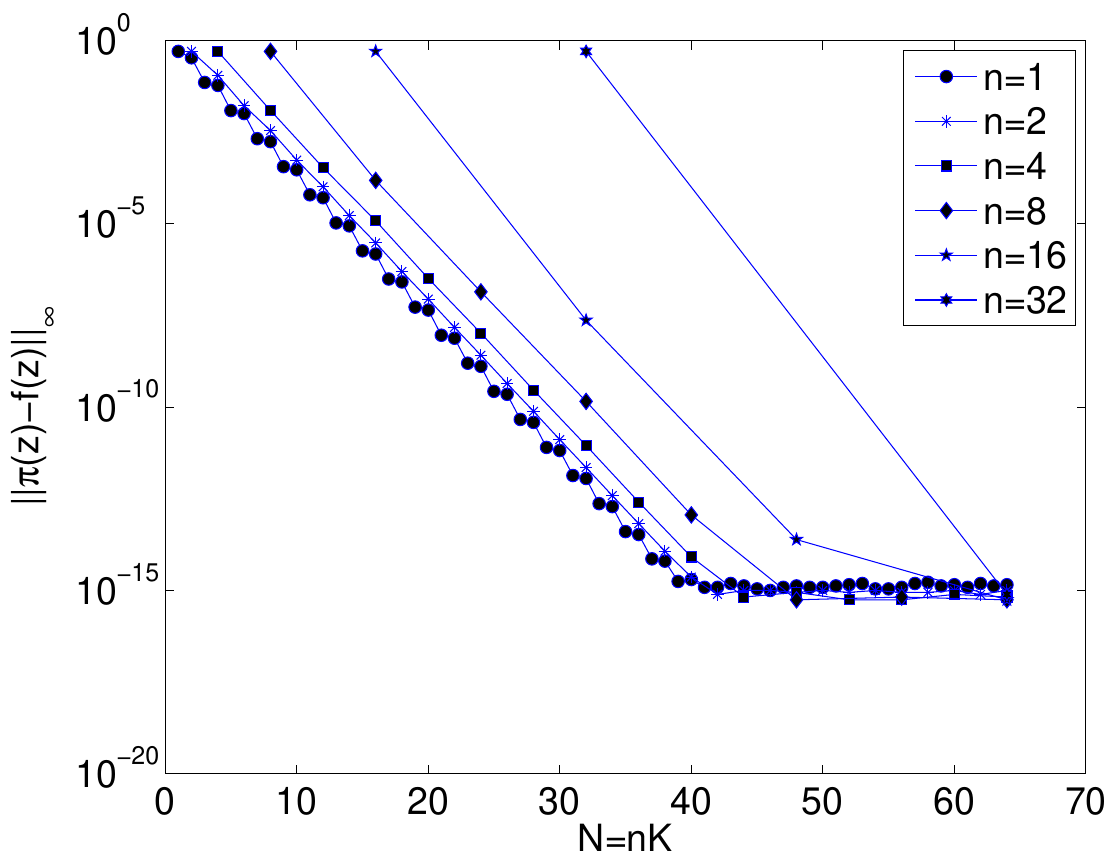}
\par\end{centering}

}

\caption{(A): Interpolants $\pi(z)$ to the Runge function $f(z)=1/(1+z^{2})$
with $K=4$ and $n=1,2,3$. (B): Dependence of interpolation error
on the number of grid points $K$ for fixed number of derivatives
$n$ at each grid point.\label{fig:(A):Runge-interpolation}}
\end{figure}
We now turn to the Runge function $f(z)=1/(1+z^{2})$, which is the
subject of computations shown in Figure \ref{fig:(A):Runge-interpolation}.
A numerically stable method to calculate higher order derivatives
of the Runge function for $z\in[-1,1]$ is implied by the following
calculation:
\begin{eqnarray*}
f(z) & = & \frac{1}{2i}\left(\frac{1}{z-i}-\frac{1}{z+i}\right)\\
\frac{d^{r}f(z)}{dz^{r}} & = & (-1)^{r}\frac{r!}{2i}\left(\frac{1}{(z-i)^{r+1}}-\frac{1}{(z+i)^{r+1}}\right).
\end{eqnarray*}
If $z-i=R\exp(i\theta)$, then we have $f^{(r)}(z)/r!=(-1)^{r-1}R^{r+1}\sin(r+1)\theta$.
This formula has excellent numerical stability for $z\in[-1,1]$.

Figure \ref{fig:(A):Runge-interpolation} shows that the error in
interpolating the Runge function is $c_{n}\exp(-\alpha nK)$. The
positive constant $\alpha$ appears to be independent of $n$. However,
$c_{n}$ increases with $n$. Therefore if we are allowed to use $N$
items of information about the function to form an accurate interpolant,
the best choice for this example is to take them all to be function
values. 

\begin{figure}
\subfloat[]{\centering{}\includegraphics[scale=0.4]{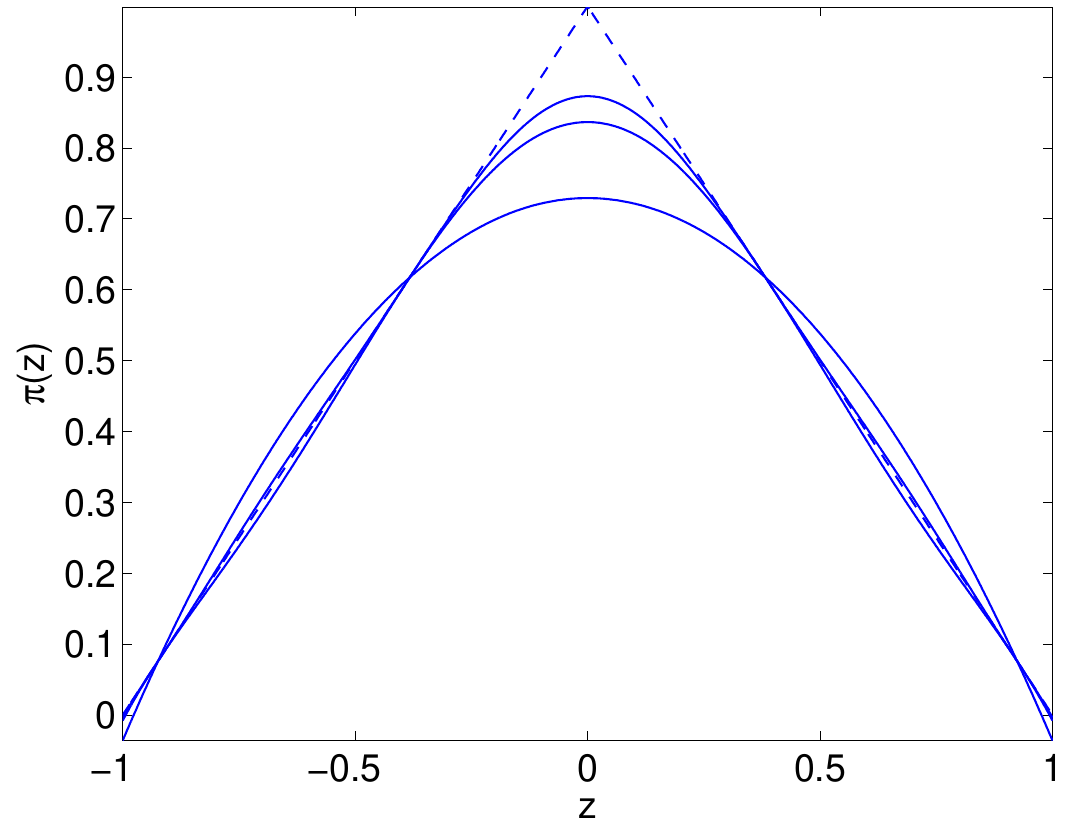}}\subfloat[]{\centering{}\includegraphics[scale=0.4]{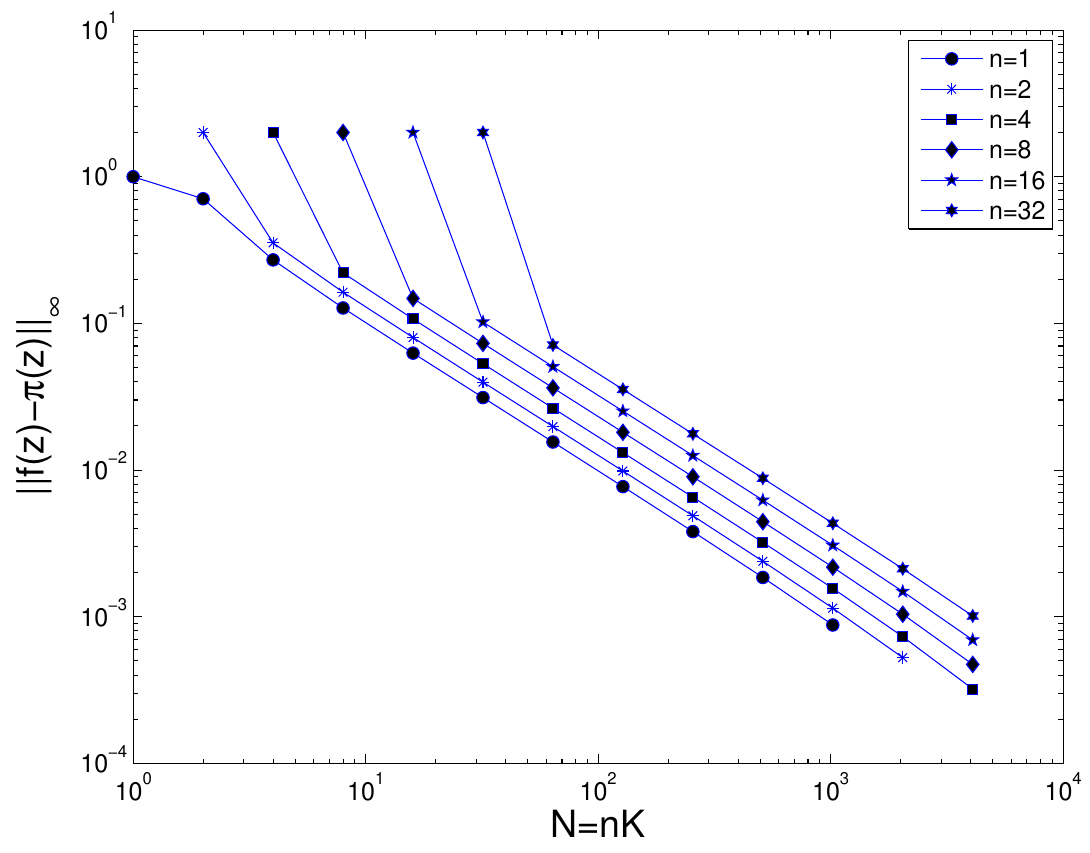}}

\caption{A): Interpolants $\pi(z)$ to the hat function $f(z)=1-|z|$ with
$K=4$ and $n=1,2,3$. (B): Dependence of interpolation error on the
number of grid points $K$ for fixed number of derivatives $n$ at
each grid point.\label{fig:hat-interpolants}}
\end{figure}
For the hat function $f(z)=1-|z|$, Figure \ref{fig:hat-interpolants}
shows that the error is proportional to $1/K$. If the error is represented
in the form $c_{n}/nK$, $c_{n}$ once again increases with $n$.

The Runge function is analytic in an open set around $[-1,1]$, while
the hat function is of bounded variation but not even continuously
differentiable. In the case $n=1$, which corresponds to ordinary
polynomial interpolation, the differing rates of convergence are easily
explained by the difference in smoothness and analyticity \cite{Davis1975}.
That connection likely persists for $n>1$, which corresponds to Hermite
interpolation.

\begin{figure}

\begin{centering}
\includegraphics[scale=0.4]{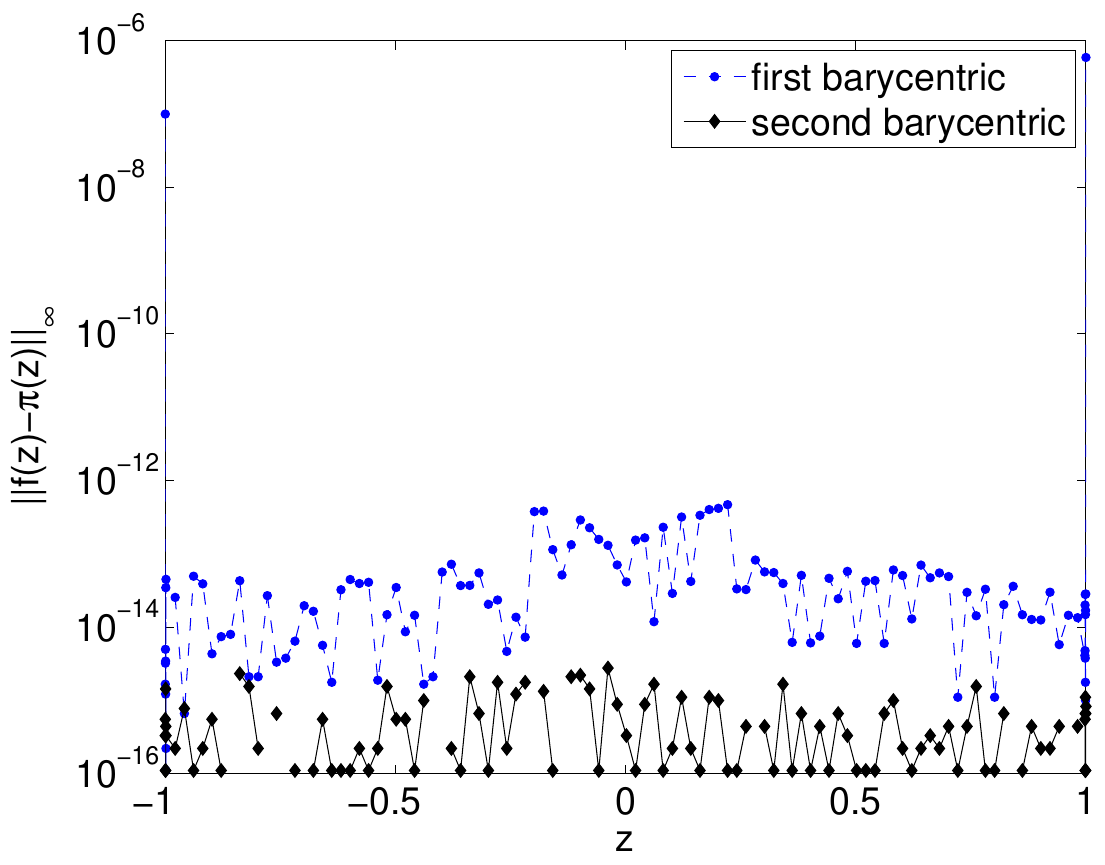}\caption{Interpolation errors to the Runge function $f(z)=1/(1+z^{2})$ with
$K=512$ grid points and $n=48$ derivatives at each grid point. The
first barycentric form is notably inaccurate at $z=\pm1$.\label{fig:Errors-fst-snd}}

\par\end{centering}

\end{figure}
The interpolations discussed so far used the second barycentric form.
Figure \ref{fig:Errors-fst-snd} compares the first barycentric form
\prettyref{eq:barycentric-form-1} to the second \prettyref{eq:barycentric-form-2}.
Even with $K=512$ grid points and $n=48$ derivatives there is no
sign of numerical instability at all. The first barycentric form is
less accurate but has small errors for $z\in(-1,1)$. At $z=\pm1$,
the first barycentric form produces much larger relative errors of
about $10^{-7}$. It is well known that the Chebyshev polynomials
are bounded by $1$ for $z\in[-1,1]$ but increase rapidly outside
the interval. That phenomenon is magnified by several orders for the
fundamental polynomials of Hermite interpolation. Thus we should expect
the Hermite interpolant with $n=48$ to be very sensitive near the
endpoints of the interval. 

Another phenomenon noticeable from Figure \ref{fig:Errors-fst-snd}
is that the errors are somewhat elevated near the center of the interval.
The second barycentric form exhibits neither elevated error near the
middle of the interval nor sensitivity at the end points. It has excellent
numerical stability.

\begin{figure}

\subfloat[]{

\centering{}\includegraphics[scale=0.4]{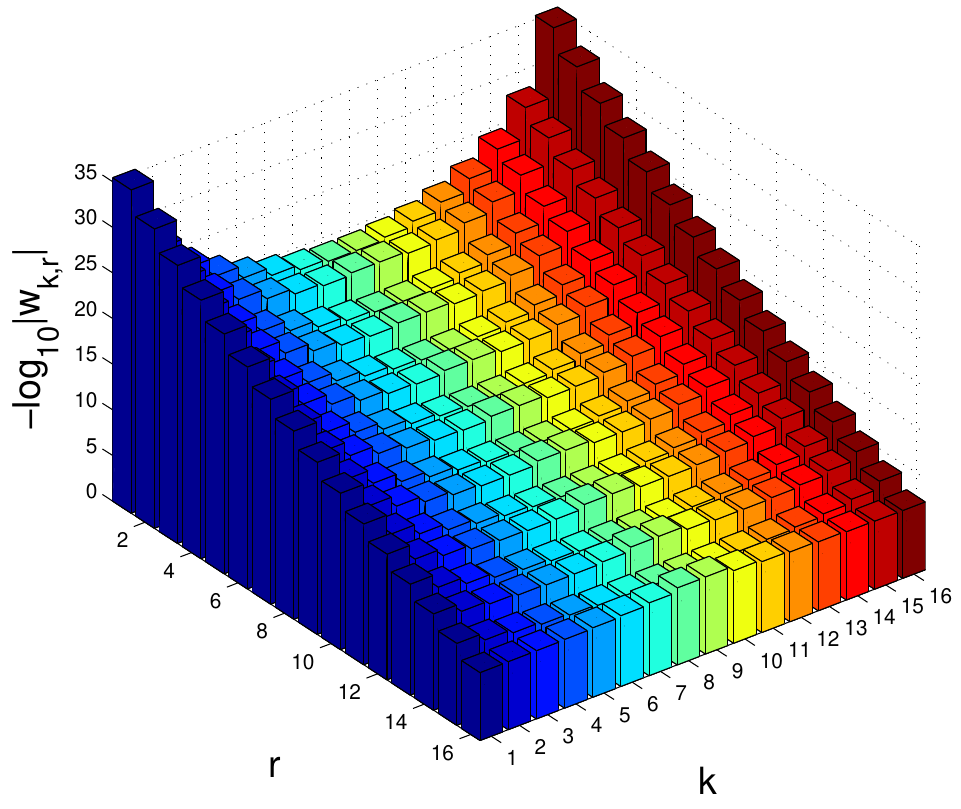}}\subfloat[]{

\begin{centering}
\includegraphics[scale=0.5]{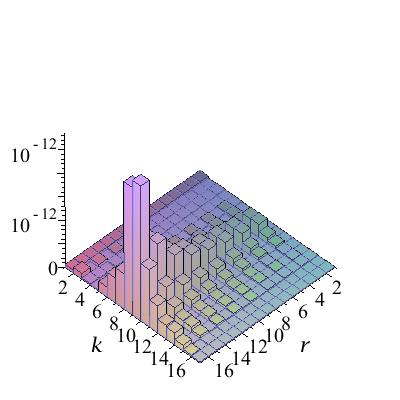}
\par\end{centering}

}\caption{(A) Magnitude of the barycentric weights $w_{k,r}$ and (B) Relative
error in the weights (the maximum relative error is $2.86\times10^{-12}$.
Both plots use $K=16$ grid points and $n=16$ derivatives at each
grid point.\label{fig:Barycentric-weights}}
\end{figure}
In Figure \ref{fig:Barycentric-weights}, we directly examine the
weights used for barycentric interpolation. These weights are computed
using Algorithm \prettyref{alg:Computing-barycentric-weights}. Figure
\ref{fig:Barycentric-weights} uses $K=16$ and $n=16$. The Chebyshev
points $z_{k}$ were multiplied by $2$ but the display in the figure
does not use Leja ordering. From part (A) of the figure, we see that
the weights $w_{k,r}$ are quite small especially when $r=0$. Thus
in spite of using an interval of capacity $1$, the $C_{k}$ that
occur on line 32 of Algorithm \ref{alg:Computing-barycentric-weights}
are getting quite small. Using an interval of capacity $1$ still
allows considerable fluctuations in $\prod_{k=1}^{K}(z-z_{k})$ around
$1$ because it is only the $1/K$-th power which is guaranteed to
converge to $1$. In computing the prefactor $C_{k}$, these fluctuations
are raised to the power $n$, with $n$ being $16$ in this instance. 

From part (B) of Figure \ref{fig:Barycentric-weights}, we see that
the relative errors in the weights $w_{k,r}$ are quite small. However,
the relative errors are the highest for $r=n$ and $z_{k}$ near the
middle of the interval. These phenomena will be partially explained
in the next section.

\begin{figure}

\subfloat[]{\begin{centering}
\includegraphics[scale=0.4]{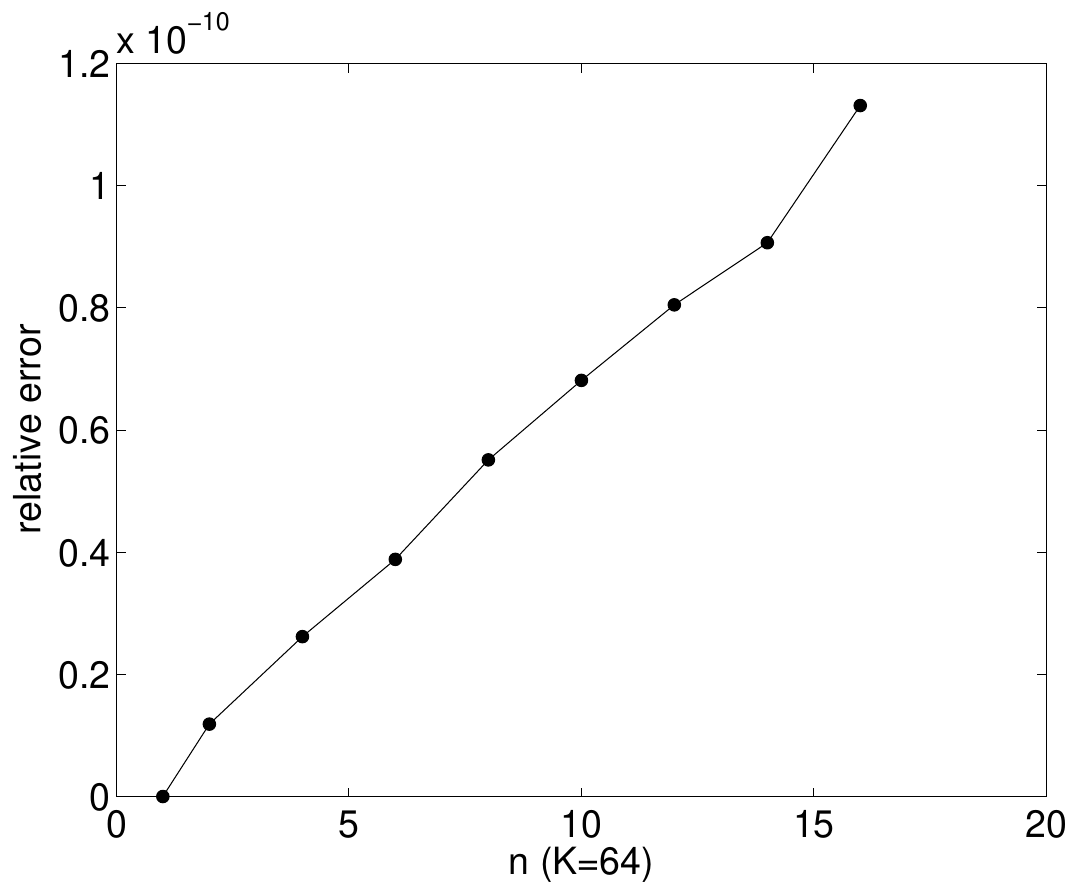}
\par\end{centering}

}\subfloat[]{\begin{centering}
\includegraphics[scale=0.4]{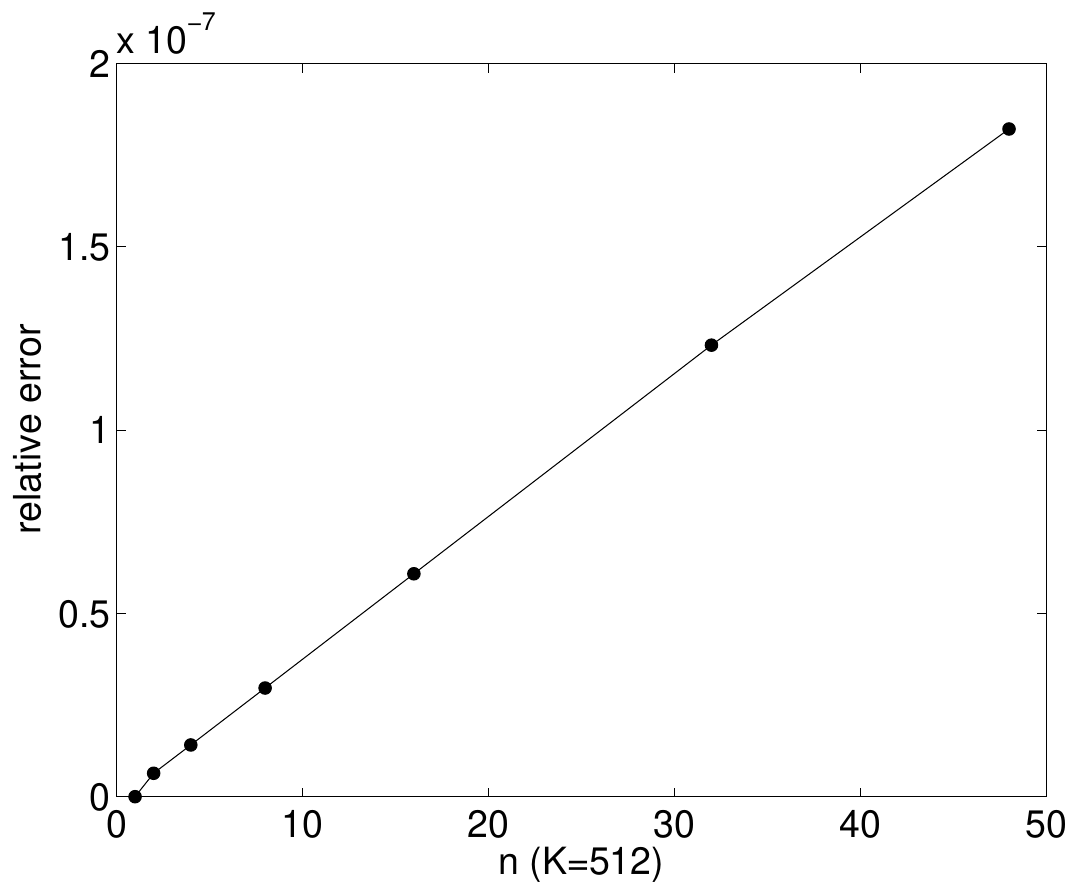}
\par\end{centering}

}\caption{Maximum relative error in the barycentric weights $w_{k,r}$ as a
function of the number of derivatives $n$ at each grid point.\label{fig:Maximum-relative-error-vs-n}}

\end{figure}
In both Figures \ref{fig:Barycentric-weights} and \ref{fig:Maximum-relative-error-vs-n},
the relative errors in the barycentric weights were obtained by comparing
double precision results of a C++ program with extended precision
computations in Maple. Figure \ref{fig:Maximum-relative-error-vs-n}
shows the increase in the maximum relative error in the barycentric
weights $w_{k,r}$ with $n$, which is the number of derivatives used
at each grid point. The increase is mild. Indeed, in part (B) of the
figure, which uses $K=512$ the increase is only linear. Algorithm
\ref{alg:Computing-barycentric-weights} for computing barycentric
weights appears to have very good numerical stability.

\section{Partial explanation of numerical stability}

On line 32 of Algorithm \ref{alg:Computing-barycentric-weights},
the weight $w_{k,r}$ is computed using $w_{k,r}=C_{k}\mathcal{I}_{r}$.
The $C_{k}$ are merely products of differences and will be computed
with excellent relative accuracy. We will restrict ourselves to the
error accumulation in the computation of $\mathcal{I}_{r}$.

We assume that $\left\{ \alpha_{1},\ldots,\alpha_{N}\right\} $ is
a multiset of numbers and that $\mathcal{P}_{r}=\sum_{i=1}^{N}\alpha_{i}^{-r}$.
The quantities $\mathcal{I}_{r}$ are defined by \prettyref{eq:I12}
and computed using $\mathcal{P}_{r}$ and the triangular identities
of Lemma \ref{lem:The-quantities-Ir}. For computing $w_{k,r}$, the
multiset is taken to be $z_{1}-z_{k}$ repeated $n_{1}$ times, $z_{2}-z_{k}$
repeated $n_{2}$ times, and so on---see line 27 of Algorithm \ref{alg:Computing-barycentric-weights}. 

If $u$ is the unit round-off ($u=2^{-53}$ for IEEE double precision
arithmetic), each arithmetic operation of the type $a+b$ will evaluate
to $(a+b)(1+\delta)$ with $|\delta|<u$ in floating point arithmetic.
If $1+\theta_{n}=\prod_{i=1}^{n}(1+\delta_{i})^{\rho_{i}}$, where
$|\delta_{i}|\leq u$ and $\rho_{i}=\pm1$, we have $\theta_{n}\leq\gamma_{n}$,
where $\gamma_{n}$ is defined as $nu/(1-nu)$ (this is Lemma 3.1
of \cite{Higham2002} for whose validity it is assumed that $nu<1$).
This lemma is convenient for accumulating the effect of several rounding
operations. 

Another convenience is the counter notation \cite{Higham2002,Stewart1973}.
In this notation, $\left<n\right>$ stands for a product of the form
$\prod_{i=1}^{n}(1+\delta_{i})^{\rho_{i}}$, where $|\delta_{i}|\leq u$
and $\rho_{i}=\pm1$. Evidently, if $\left<n\right>=1+\delta$ then
$|\delta|\leq\gamma_{n}$ (assuming $nu<1$). In floating point arithmetic,
$(a+b)+c$ evaluates to $a\left<2\right>+b\left<2\right>+c\left<1\right>$.

If $\mathcal{P}_{1}=1/\alpha_{1}+\cdots+1/\alpha_{N}$ is computed
in the order it is written, it evaluates to 
\[
1/\alpha_{1}\left<N\right>+1/\alpha_{2}\left<N\right>+1/\alpha_{3}\left<N-1\right>\cdots+1/\alpha_{N}\left<2\right>.
\]
If $\alpha_{i}$ is perturbed to $\alpha_{i}\left<N-i+2\right>$,
the computed result is the exact result. Thus the computation of $\mathcal{P}_{1}$
is backward stable. However, the computation of $\mathcal{P}_{1}$
can still have large relative errors if there are near cancellations.
In Figure \ref{fig:Maximum-relative-error-vs-n} (A), the relative
error in $\mathcal{P}_{1}$ (the maximum over all $z_{k}$ which occurs
for $z_{k}$ near the middle of the interval) is $1.18\times10^{-11}$.
In part (B), which correspond to $K=512$, the relative error in $\mathcal{P}_{1}$
is $6.42\times10^{-9}$. If there is a point at $0$ and the positive
and negative grid points are symmetric, the exact value of $\mathcal{P}_{r}$
is zero for $r$ an odd number and the relative error is infinite.
However, in this case, $\mathcal{P}_{r}$ will have small relative
error for even $r$ and the only power sums that influence the $\mathcal{I}_{r}$
correspond to even powers. If $r$ is even, $\mathcal{P}_{r}$ is
always computed with excellent relative accuracy as there can be no
cancellations. 

Since the error analysis of $\mathcal{P}_{r}$ is relatively tractable
and in view of the comments made in the previous paragraph, we will
temporarily assume all the power sums $\mathcal{P}_{r}$ to be computed
with small relative errors. The more complex issue is the manner in
which the errors are amplified when the quantities $\mathcal{I}_{r}$
are computed using the triangular identities of Lemma \ref{lem:The-quantities-Ir}.
Figure \ref{fig:Maximum-relative-error-vs-n} shows that the errors
grow only very mildly when going from $\mathcal{P}_{r}$ to $\mathcal{I}_{r}$.

To understand the error growth, we recast the triangular identities
of Lemma \ref{lem:The-quantities-Ir} into matrix form:
\begin{equation}
\left(\begin{array}{ccccc}
1\\
-\mathcal{P}_{1} & 1\\
-\mathcal{P}_{2} & -\mathcal{P}_{1} & 2\\
-\mathcal{P}_{3} & -\mathcal{P}_{2} & -\mathcal{P}_{1} & 3\\
\ddots & \ddots & \ddots & \ddots & \ddots
\end{array}\right)\left(\begin{array}{c}
1\\
\mathcal{I}_{1}\\
\mathcal{I}_{2}\\
\mathcal{I}_{3}\\
\vdots
\end{array}\right)=\left(\begin{array}{c}
1\\
0\\
0\\
0\\
\vdots
\end{array}\right).\label{eq:matrix-system}
\end{equation}
If the condition number of the matrix in \prettyref{eq:matrix-system}
is $\kappa$, the norm-wise relative error in the $\mathcal{I}_{r}$
is bounded by $\frac{2\epsilon\kappa}{1-\epsilon\kappa}$, where $\epsilon$
is of the order of $u$, the unit round-off. Unfortunately, this bound
is useless because $\kappa$ can be very large. If a single $\alpha_{i}$
is less than $1$, which is always the case, the $\mathcal{P}_{r}$
will increase exponentially with $r$. The condition number of the
system \prettyref{eq:matrix-system}, retaining only $n+1$ rows in
that system, will increase exponentially with $n$. 

Although $\kappa$ is quite bad, there is a simple way to get a matrix
system equivalent to \prettyref{eq:matrix-system} with a small condition
number. Suppose the $\alpha_{i}$s are all multiplied by a scaling
factor $s$ and changed to $s\alpha_{i}$. Then $\mathcal{P}_{r}$
and $\mathcal{I}_{r}$ are transformed to $P_{r}=s^{-r}\mathcal{P}_{r}$
and $I_{r}=s^{-r}\mathcal{I}_{r}$, respectively. If $s$ is sufficiently
large then $|P_{r}|\leq1$ for $1\leq r\leq n$.

The $(n+1)\times(n+1)$ matrix obtained by replacing the power sums
$\mathcal{P}_{r}$ by their scaled version $P_{r}$ has condition
number bounded by $n+1$. In general, triangular systems have condition
numbers that increase exponentially even if all the off-diagonal entries
are bounded by $1$ in magnitude \cite{ViswanathTrefethen1998}. Here
the situation is different because the diagonal entries increase in
the order $1,1,2,3,\ldots$ We have the bound
\[
|I_{r}|=\frac{|P_{r}+I_{1}P_{r-1}+\cdots+I_{r-1}P_{1}|}{r}\leq\frac{1+|I_{1}|+\cdots+|I_{r-1}|}{r}\leq1,
\]
 where the last inequality is by induction. Thus all entries in the
inverse will be bounded by $1$.

If a large scaling factor $s$ is used to control the condition number
of the matrix, the difficulty comes in when we use $\mathcal{I}_{n}=s^{n}I_{n}$
to estimate the relative error in $\mathcal{I}_{n}$. The absolute
errors in $I_{r}$ can be bounded by a quantity of the form $n^{5/2}u$,
or even $nu$ if the scaling factor $s$ is chosen to make the power
sums small enough , where $u$ is the unit round-off. However, if
$I_{n}$ is much smaller than $1$ in magnitude, the norm-wise bounds
will imply a large relative error in $I_{n}$ and in $\mathcal{I}_{n}$.
This difficulty can be removed if a scaling factor $s$ can be found
such that $|P_{r}|\leq1$ for $1\leq r\leq n$ and $|I_{n}|$ is $\mathcal{O}(1)$.
The linear growth in relative errors in Figure \ref{fig:Maximum-relative-error-vs-n}
suggests that such a scale might exist. Proving the existence of such
a scale will require estimates of the quantities $\mathcal{P}_{r}$
and $\mathcal{I}_{r}$.

The discussion in this section partially backs up the finding of Figure
\ref{fig:Maximum-relative-error-vs-n} that the growth of relative
errors in the barycentric weights with the order of the derivative
is mild. The contribution to the errors made by $\mathcal{P}_{r}$
is therefore quite significant. Even though the computation of each
power sum $\mathcal{P}_{r}$ is backward stable, the sum itself could
be ill-conditioned. In fact, by looking up $n=1$ in Figure \ref{fig:Maximum-relative-error-vs-n},
we see that the relative error in $\mathcal{P}_{1}$ is not so small.
We implemented compensated summation in x86 assembly language and
verified that it does not reduce  forward errors. 

The following proposition allows us to verify the accuracy of some
of the barycentric weights.
\begin{prop}
If the $\alpha_{i}$ are all positive, the relative error in $\mathcal{P}_{r}$
is bounded by $\gamma_{r+N}$ and the relative errors in $\mathcal{I}_{r}$
are bounded by $\gamma_{3rN}$ for sufficiently small unit round-off
and $r\leq N$. \label{pro:If-the-rel-error}\end{prop}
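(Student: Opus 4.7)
The plan is to exploit the positivity of the $\alpha_i$ to rule out cancellation throughout the computation, so that the standard bookkeeping inequality $\gamma_a + \gamma_b + \gamma_a\gamma_b \leq \gamma_{a+b}$ gives essentially sharp bounds. First I would establish positivity of every intermediate quantity: each $\mathcal{P}_r$ is a sum of positive reals, and the generating function in \prettyref{eq:I12} factorises as
\[
\frac{1}{\prod_{i=1}^N(1 - z/\alpha_i)} = \prod_{i=1}^N \sum_{k \geq 0} \left(\frac{z}{\alpha_i}\right)^k,
\]
which exhibits $\mathcal{I}_r$ as the $r$-th coefficient of a product of power series with nonnegative coefficients; hence $\mathcal{I}_r > 0$. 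Since the floating-point relative errors will be controlled by $\gamma$ factors close to zero, the computed approximations also stay positive, and no cancellation ever occurs.

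For the bound on $\mathcal{P}_r$, I would trace through the loop body of \texttt{PowerSums}: one division produces $\zeta_i = 1/\alpha_i$ and $r-1$ further multiplications build $\zeta_i^r$, so each computed $\alpha_i^{-r}$ carries relative error at most $\gamma_r$. The subsequent summation of $N$ positive terms contributes at most $\gamma_{N-1}$ additional relative error per summand. Because every summand is positive, the two factors combine componentwise, and using $\gamma_r + \gamma_{N-1} + \gamma_r \gamma_{N-1} \leq \gamma_{r+N-1}$ the overall relative error in $\tilde{\mathcal{P}}_r$ is at most $\gamma_{r+N-1} \leq \gamma_{r+N}$.

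For $\mathcal{I}_r$, I would induct on $r$, assuming $\tilde{\mathcal{I}}_s = \mathcal{I}_s(1+f_s)$ with $|f_s| \leq \gamma_{3sN}$ for every $s < r$ (and $f_0 = 0$). Each product $\tilde{\mathcal{I}}_s \tilde{\mathcal{P}}_{r-s}$ accumulates the three multiplicative errors $f_s$, $e_{r-s}$, and one new rounding, yielding a relative error bounded by $\gamma_{3sN + (r-s) + N + 1}$; this is maximised at $s = r - 1$, where it equals $\gamma_{3rN - 2N + 2}$. Adding the $r - 1$ summation errors (which, crucially, do not cancel because all summands are positive) and the final division by $r$, the aggregate relative error in $\tilde{\mathcal{I}}_r$ is bounded by $\gamma_{3rN - 2N + r + 2}$. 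For $r \leq N$ with $N \geq 2$ one has $r + 2 \leq 2N$, so this collapses to $\gamma_{3rN}$; the case $N = 1$ reduces immediately to the $\mathcal{P}_1$ estimate.

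The main obstacle is the combinatorial bookkeeping rather than any deep idea: the worst-case $\gamma$-index at step $r$ grows linearly in $s$ with slope $3N - 1$, so one must check that the peak (at $s = r - 1$) plus the summation and division contributions fits under the target $3rN$. The phrase \emph{sufficiently small unit round-off} is what licenses absorbing benign slack such as $\gamma_{r+N-1} \leq \gamma_{r+N}$ and ensures the quantity $\gamma_{3rN}$ itself is finite, i.e. $3rNu < 1$.
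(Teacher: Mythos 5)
Your proposal is correct and follows essentially the same route as the paper's proof: counter-style rounding-error bookkeeping, the observation that positivity of all summands prevents cancellation so that relative errors of a sum are controlled by those of its terms, and induction on $r$ through the triangular identities, with the worst-case counter occurring at the $\mathcal{I}_{r-1}\mathcal{P}_{1}$ term. Your version is somewhat more explicit than the paper's in two respects---you justify the positivity of the $\mathcal{I}_{r}$ via the product of nonnegative power series, and you carry out the arithmetic showing the peak index $3rN-2N+r+2$ fits under $3rN$---but these are refinements of the same argument, not a different one.
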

\begin{proof}
The computed quantity $\mathcal{P}_{r}$ can be represented as 
\[
\alpha_{1}^{-r}\left<N+r\right>+\alpha_{2}^{-r}\left<N+r-1\right>+\cdots+\alpha_{N}^{-r}<r+1>.
\]
Because each $\alpha_{i}$ is positive, the computed quantity can
be represented as $\mathcal{P}_{r}\left<N+r\right>$ thus proving
one half of the proposition.

For the other half, we note that that $\mathcal{I}_{r}=\left(\mathcal{P}_{r}+\mathcal{I}_{1}\mathcal{P}_{r-1}+\cdots+\mathcal{I}_{r-1}\mathcal{P}_{1}\right)/r$.
By induction, the quantity computed for $\mathcal{I}_{r}$ can be
represented as 
\[
\frac{\mathcal{P}_{r}\left<N+r\right>\left<r\right>+\cdots+\mathcal{I}_{r-1}\mathcal{P}_{1}\left<3(r-1)N\right>\left<N+1\right>\left<r\right>}{r}.
\]
In this expression the $\left<r\right>$ factors that accompany each
term in the numerator are caused by the arithmetic operations for
computing $\mathcal{I}_{r}$ using $\mathcal{I}_{s}$ for $1\leq s\leq r-1$.
The factors that accompany each term in the numerator can be multiplied
and replaced by $\left<3Nr\right>$. Since each term is positive,
it follows that the computed quantity can be represented as $\mathcal{I}_{r}\left<3Nr\right>$.
\end{proof}
This proposition implies that the weights $w_{k,r}$ are computed
with excellent accuracy when $z_{k}$ is either the least or the greatest
of the grid points. Indeed, we see from Figure \ref{fig:Barycentric-weights}(B)
that the relative errors near the end points of the interval are very
small. If $z_{k}$ is the least of the grid points, each $z_{j}'$
, $1\leq j\leq K-1$ that occurs on line 27 of Algorithm \ref{alg:Computing-barycentric-weights}
is negative and non-zero. Since $-1/z_{j}'$ corresponds to $\alpha_{j}$
in Proposition \ref{pro:If-the-rel-error}, the proposition is immediately
applicable. If $z_{k}$ is the greatest of the grid points, each $-1/z_{j}'$
is negative and the proposition is not immediately applicable. However,
it is evident from inspection that $\mathcal{P}_{r}$ and $\mathcal{I}_{r}$
are negative or positive according as $r$ is odd or even. Therefore
every sum in the triangular identities of Lemma \ref{lem:The-quantities-Ir}
has terms that are all positive or all negative. Thus the bounds on
relative errors proved in the proposition will apply. 

We have candidly highlighted the inadequacy, in situations not covered
by Proposition \ref{pro:If-the-rel-error}, of the bounds that can
be derived on the rounding errors in the computed barycentric weights.
Thus the title of this section only claims a partial explanation.
Existing results in rounding error analysis known to us give bounds
which are highly pessimistic.

Numerical stability of barycentric Lagrange interpolation has been
studied by Higham \cite{Higham2004}. It may be suspected that the
methods of that paper can be used for the numerical stability analysis
of barycentric Hermite interpolation. However, that is far from being
the case. The definition of the condition number used by Higham perturbs
the function values but not the grid points. Such a definition is
useful in the context of Lagrange interpolation because the Lagrange
weights are obtained from products of the type $\prod_{j\neq k}(z_{j}-z_{k})$.
The basic rules of double precision arithmetic imply right away that
each Lagrange weight is computed with a very small relative forward
error which can be easily bounded. In the case of Hermite interpolation,
the computation of the weights is a great deal more complicated as
we have emphasized. In particular, the computation involves back substitution
or the inversion of the triangular system displayed explicitly in
\prettyref{eq:matrix-system}. 

If good bounds on the forward errors of the barycentric weights $w_{k,r}$
can be obtained, the rounding error analysis simplifies greatly. The
analysis of the first barycentric form \prettyref{eq:barycentric-form-1}
closely parallels the rounding error analysis of inner products of
the type $\sum_{i=1}^{n}a_{i}b_{i}$. For the second barycentric form,
one needs to tackle expressions of the form $A_{1}/A_{2}-A_{3}/A_{4}$,
where each $A_{i}$ has the structure of an inner product. While this
form is slightly more complicated algebraically, it poses none of
the difficulties associated with triangular inversion or back substitution.

Higham's treatise \cite{Higham2002} on numerical algorithms discusses
component-wise analysis of linear systems in Chapter 7 and triangular
inversion in Chapter 8. We now explain why none of the results in
those two chapters help explain the excellent numerical stability
of barycentric Hermite interpolation illustrated in Figure \ref{fig:Maximum-relative-error-vs-n}.
In the component-wise bound of Higham's Theorem 7.4, it is crucial
note the $|A^{-1}|$ that appears in the numerator. Viswanath and
Trefethen have shown \cite{ViswanathTrefethen1998} that the inverses
of triangular matrices generically have norms that are exponential
in the dimension. Therefore such bounds are highly pessimistic for
the situation illustrated in Figure \ref{fig:Maximum-relative-error-vs-n}.
As we described in detail in the arguments leading up to Proposition
\ref{pro:If-the-rel-error}, an attempt at mitigating the norm of
the inverse by scaling the grid points is ultimately of no value.
Exactly the same comments apply to Higham's Lemma 7.9. The difficulty
that one faces is more explicit in Higham's Lemma 8.6, Theorem 8.7,
and Theorem 8.14. In each of those results, the exponential dependence
of the worst case bounds on the dimension of the matrix system is
explicit. If those bounds are applied to \eqref{eq:matrix-system},
the resulting bounds on the errors in the barycentric weights, corresponding
to the two plots in Figure \ref{fig:Errors-fst-snd}, would be off
by factors of $2^{64}$ and $2^{512}$, respectively.

The worst case bounds on rounding errors are exponential in the dimension
of the linear system. The results of Viswanath and Trefethen \cite{ViswanathTrefethen1998}
show that the same is true generically, thus suggesting strongly that
the worst case bounds will be difficult to improve. It is noteworthy
that Higham \cite{Higham2002} begins the chapter on triangular systems
by quoting authorities who realized decades ago that the bounds that
can be derived on the error are usually pessimistic while in practice
many triangular systems can be solved quite accurately. 

We conclude by recommending the second barycentric form \prettyref{eq:barycentric-form-2},
with weights computed using the method of Butcher et al. \hspace{-0.5cm}
\cite{ButcherCorless2011} as presented in Algorithm \ref{alg:Computing-barycentric-weights},
as the standard algorithm for Hermite interpolation. The reasons for
this recommendation are the same as those given by Berrut and Trefethen
\cite{BerrutTrefethen2004} for barycentric Lagrange interpolation,
namely, conceptual simplicity, excellent numerical stability, and
flexibility in incorporating new data points. To those reasons, we
may add the finding of Section 2 that barycentric Hermite interpolation
has a lower operation count than Hermite interpolation using divided
differences.

\section{Acknowledgments}

We thank Prof. Sergey Fomin for references that led us to attribute
Lemma \ref{lem:The-quantities-Ir} to L. Crocchi \cite[1880]{Crocchi1880}.

\bibliographystyle{plain}
\bibliography{references}

\end{document}